\newcommand{\calA}{\mathcal{A}}
\newcommand{\calB}{\mathcal{B}}
\newcommand{\calT}{\mathcal{T}}
\newcommand{\bbR}{\mathbb{R}}
\newcommand{\bfz}{\mathbf{z}}
\newcommand{\bmx}{\begin{bmatrix}}
\newcommand{\emx}{\end{bmatrix}}
\newcommand{\setX}{\mathcal{X}}
\newcommand{\setY}{\mathcal{Y}}
\newcommand{\setP}{\mathcal{P}}
\newcommand{\setA}{\mathcal{A}}
\newcommand{\vdm}{\mathop{\mathrm{vdm}}}
\newcommand{\VDM}{\mathop{\mathrm{VDM}}}
\newcommand{\diag}{\mathop{\mathrm{diag}}}
\newtheorem{proposition}{Proposition}
\newtheorem{corollary}{Corollary}
\begin{document}

\begin{frontmatter}
\title{On certain multivariate Vandermonde determinants whose variables separate}
\author[padua]{Stefano De Marchi}
\ead{demarchi{@}math.unipd.it}
\author[vub]{Konstantin Usevich\corref{cor1}}
\ead{Konstantin.Usevich{@}vub.ac.be}

\cortext[cor1]{Corresponding author}

\address[padua]{University of Padova, Department of Mathematics,
Via Trieste, 63, I-35121 PADOVA, Italy}

\address[vub]{Vrije Universiteit Brussel, Department ELEC, Pleinlaan 2, B-1050, Brussels, Belgium}

\begin{abstract}
We prove that for almost square tensor product grids and certain sets of bivariate polynomials the Vandermonde determinant can be factored into a product of univariate Vandermonde determinants. This result generalizes the conjecture \cite[Lemma 1]{Bos.2009.OTV}. As a special case, we apply the result to Padua and Padua-like points.
\end{abstract}

\begin{keyword}
multivariate Vandermonde determinant, Padua points, tensor product grid, polynomial matrices, semiseparable matrices.

\MSC[2010] 15A15  \sep 15B99   \sep 41A05  

\end{keyword}

\end{frontmatter}

\section{Introduction}
This paper is mainly inspired by the results in \cite{Bos.2009.OTV} where the authors discussed the properties of the Vandermonde determinants associated to point sets on the square that distribute as the Padua points. In order to understand the result of this article, we briefly recall the definition
and the construction of the Padua points.

The Padua points are the first known near-optimal point set for bivariate polynomial interpolation of total degree in the square $[-1,1]^2$, whose Lebesgue constants have minimal order of growth of ${\cal O}((\log n)^2)$, $n$ being the polynomial degree \cite{Bos.2006.BLI,  Bos.2007.BLI}.

It has been observed that these points have the structure of the union of {\it two} (tensor product) grids of Chebyshev-Lobatto points, 
one square and the other rectangular. Actually there are four families of Padua
points, obtainable one from the other by a suitable rotation of 90, 180 or 270
degrees. For the sake of simplicity, we consider here only the construction of the points belonging to the first family, displayed in Fig.~\ref{fig1}.

Let start by taking the $n+1$ Chebyshev--Lobatto points on $[-1,1]$
\begin{equation*}
C_{n+1}:=\left\{z^n_j=\cos\left(\frac{(j-1)\pi}{n}\right),\ j=1,\ldots,n+1\right\}\,.
\end{equation*}
We then consider two subsets of points with {\it odd} and {\it even} indices
\begin{equation*}
\begin{aligned}
C_{n+1}^\mathrm{o}&:= \left\{z^n_j,\ j=1,\ldots,n+1,
\ j\ \text{odd}\right\}\\
C_{n+1}^\mathrm{e}&:= \left\{z^n_j,\ j=1,\ldots,n+1, \ j\
\text{even}\right\}
\end{aligned}
\end{equation*}
Then, the Padua points of the first family are the set
\begin{equation} \label{pd1}
{\cal P}_n := \left( C_{n+1}^\mathrm{o}\times
C_{n+2}^\mathrm{o} \right)  \cup \left( C_{n+1}^\mathrm{e}\times
C_{n+2}^\mathrm{e} \right) \subset C_{n+1}\times C_{n+2}\,.
\end{equation}
These points have cardinality of the space of bivariate polynomials of degree $\le n$, i.e. $N=(n+1)(n+2)/2$.

There is another interesting geometric interpretation: Padua points are self-intersections and boundary
contacts of the following (parametric and periodic) generating curve
\begin{equation*}
\gamma(t)=(-\cos((n+1)t),-\cos(nt)),\quad t\in[0,\pi]
\end{equation*}
which turns out to be a {\it Lissajous curve} \cite{Bos.2006.BLI}. In Fig.~\ref{fig1}, we show the two grids and the generating curve for $n=4$. In this case, the square grid has 9 points while the rectangular one has 6 points.
\begin{figure}[ht!]
\centering
\includegraphics[height=5cm]{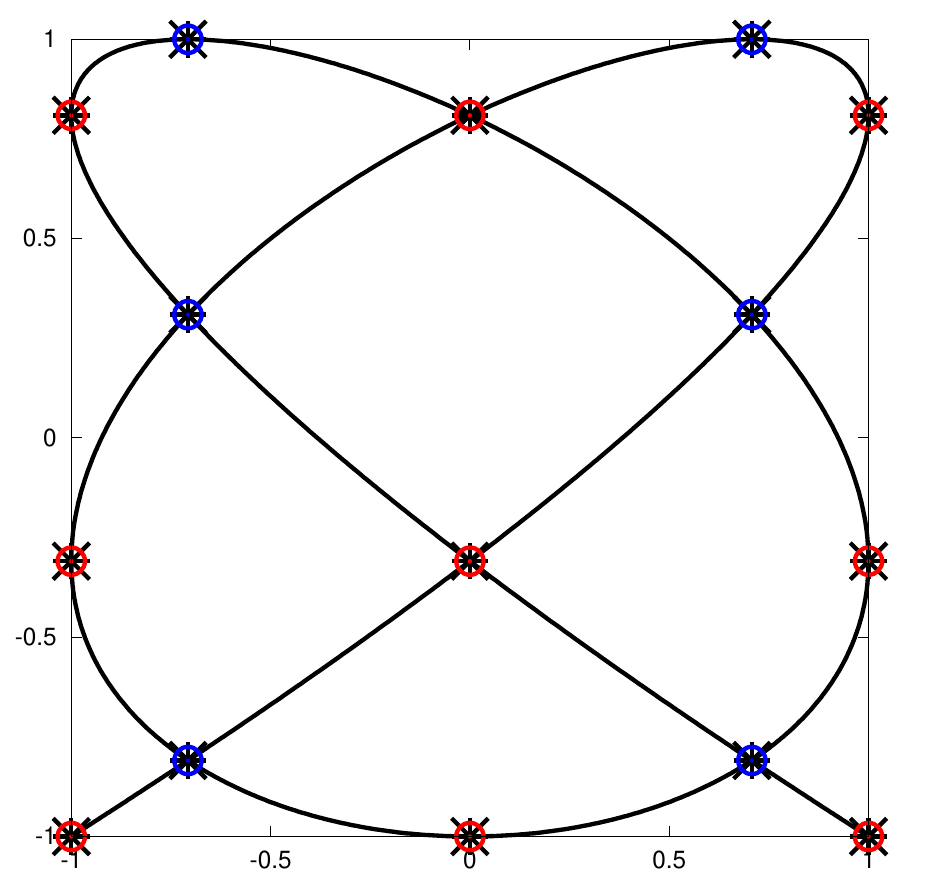}
\caption{Padua points and their generating curve for $n=4$. The grids of odd and even indices are indicated with different colours and style. } \label{fig1}
\end{figure}

For more details on Padua points, their properties and applications we refer the 
interested reader to the web page 
\href{http://www.math.unipd.it/~marcov/CAApadua.html}{http://www.math.unipd.it/$\sim$marcov/CAApadua.html} 
that also contains an up-to-date bibliography on the topic.

In \cite{Bos.2009.OTV} has been conjectured an interesting formula for the Vandermonde determinant of {\it any set of points} with exactly a similar  
structure like that of Padua points. Surprisingly, the Vandermonde determinant factors into the product of two univariate functions. 
The technical Lemma 1 \cite[Lemma 1]{Bos.2009.OTV}, very important in that paper, was conjectured
to be true but up to now a correct proof has not been given. This article provides a general proof of this (special) 
factorization that applies to any set of points having a similar structure. 

\section{Notation}
We denote by $\bbR^{m \times n}$ the space of $m\times n$ real matrices, by $\diag(V) \in \bbR^{n\times n}$ 
the diagonal matrix constructed from $V \in \bbR^{n}$, and by $I_n \in \bbR^{n\times n}$ the identity matrix.
We denote by $\bbR^{m\times n}[\bfz]$ the space of $m\times n $ real polynomial matrices, in $q$ variables $\bfz = (z_1, \ldots, z_q)$. For a polynomial matrix $P \in \bbR^{m\times n}[\bfz]$, we denote by $P_{:,j} \in \bbR^{m\times 1}[\bfz]$ (resp. $P_{i,:} \in \bbR^{1\times n}[\bfz]$) the  $j$-th column (resp. $i$-th row), and by $P_{:,j:k} \in \bbR^{m\times (k-j+1)}[\bfz]$ the submatrix constructed from the $j$-th to $k$-th columns of $P$. For two univariate polynomial matrices $P \in \bbR^{m\times n}[x]$ and $Q \in \bbR^{m\times n}[y]$, we denote their Hadamard (element-wise) product as $P \circ Q \in \bbR^{m\times n}[\bfz]$, $\bfz=(x,y)$. 

For a set of polynomials $\setP = \{p_1(\bfz), \ldots, p_n(\bfz) \}$ and a set of points $\setA = \{{\bf a}_1, \ldots, {\bf a}_n\}$, we denote by $\VDM (\setA,\setP)$  the Vandermonde matrix $\VDM (\setA,\setP) = \bmx p_j({\bf a}_i) \emx_{i,j=1}^{n,n}$ and by $\vdm (\setA,\setP) = \det \VDM (\setA,\setP)$
its determinant. We should note that $\VDM (\setA,\setP)$ is defined uniquely only if a specific order of the elements of $\setA$  and $\setP$ is fixed. However, we are mainly interested in the absolute value of $\vdm (\setA,\setP)$, and therefore, the particular order does not matter. For convenience, we also use the notation $\vdm (\setA,P) = \vdm(\setA, \{P_{i,j}(\bfz)\}_{i,j=1}^{m,n})$ and $\VDM (\setA,P) = \VDM(\setA, \{P_{i,j} (\bfz)\}_{i,j=1}^{m,n})$ for a polynomial matrix $P \in \bbR^{m\times n}[\bfz]$.

\section{Vandermonde determinants whose variables separate}
\subsection{The main result}
\begin{proposition}\label{prop:vdm_separable}
Assume that $m,n \ge 1$ are integers such that $n = m$ or $n = m+1$. Let  $P \in \bbR^{m\times n}[x]$, $Q \in \bbR^{m\times n}[y]$ be two polynomial matrices of the form
\begin{equation}\label{eq:stmat}
Q(y) = 
\bmx
1 & *      &  \dots & \dots         & *     \\
1 & q_1(y) &  *      &  \dots       & *      \\
\vdots & \vdots &  \ddots & \ddots  & \vdots \\
1 & q_1(y) &  \dots  & q_{n-1}(y)   & *     
\emx, \quad
P(x) = 
\bmx
*      & 1 &  1 & \dots    & 1 \\
*      & *      &  p_1(x) & \dots    & p_1(x)      \\
\vdots &        &  \ddots & \ddots     & \vdots \\
*      & *      &  \dots  & *        & p_{n-1}(x)     
\emx,
\end{equation}
i.e., the lower triangular block of $Q$ (including the main diagonal) has constant columns, and the upper triangular block of $P$ (excluding the main diagonal) has constant rows, and $*$ denote arbitrary (not necessarily equal to each other) polynomials.
Also assume that 
\[
\mbox{all }p_{j}, q_j \mbox{ are monic, } \quad \deg ({p_j}) = \deg ({q_j}) = j.
\]
Then for $\setX = \{x_1, \ldots, x_m\}$ and $\setY= \{y_1, \ldots, y_n\} $, the following equality holds:
\begin{equation}\label{eq:vdm_fac}
\vdm(\setX \times \setY,P \circ Q) = \pm\left(\prod\limits_{j=1}^{n} \vdm(\setX,P_{:,j}) \right) \cdot \left(\prod\limits_{i=1}^{m} \vdm(\setY, Q_{i,:}) \right).
\end{equation}
\end{proposition}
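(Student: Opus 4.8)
The plan is to work directly with the $mn\times mn$ Vandermonde matrix $V=\VDM(\setX\times\setY,P\circ Q)$ and to block-triangularize it by exploiting the separated structure of its entries. I would order its rows by grouping the tensor-grid points according to their $x$-coordinate (so $m$ groups of $n$ rows, the $k$-th group being $(x_k,y_1),\ldots,(x_k,y_n)$) and order its columns by the column index $j$ of $P\circ Q$. The decisive observation is that the first column block ($j=1$) of $P\circ Q$ depends on $x$ only: since the whole first column of $Q$ equals $q_0\equiv 1$, one has $(P\circ Q)_{i,1}=P_{i,1}(x)$, so within each $x_k$-group this block is constant in the $y$-index.

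Using this, I would perform row operations inside each $x_k$-group, subtracting the first row of the group from the others, to annihilate the $j=1$ block below the $m$ chosen base rows $(x_k,y_1)$. This renders $V$ block triangular for the partition [base rows $\mid$ difference rows]$\,\times\,$[$j=1$ columns $\mid$ $j\ge 2$ columns], with top-left diagonal block exactly $\VDM(\setX,P_{:,1})$, peeling off the factor $\vdm(\setX,P_{:,1})$. The complementary block is again a tensor-grid Vandermonde, over $\setX\times\{y_2,\ldots,y_n\}$, with entries $P_{ij}(x_k)\bigl(Q_{ij}(y_l)-Q_{ij}(y_1)\bigr)$; its $i=1$ column block is now a function of $y$ only, because $P_{1,j}\equiv 1$ for $j\ge 2$ (the constant-rows hypothesis on $P$). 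The symmetric operation, grouping the remaining rows by their $y$-coordinate, then peels off $\vdm(\setY,Q_{1,:})$ and leaves an $(m-1)\times(n-1)$ problem over $\{x_2,\ldots,x_m\}\times\{y_2,\ldots,y_n\}$. Together with the elementary identities $\vdm(\setX,P_{:,j})=\vdm(\{x_2,\ldots,x_m\},P^{(2)}_{:,j-1})$ and $\vdm(\setY,Q_{i,:})=\vdm(\{y_2,\ldots,y_n\},Q^{(2)}_{i-1,:})$ (same "subtract the reference row and expand along the constant first column" trick), this sets up an induction on $m$, with $n-m\in\{0,1\}$ preserved and the base cases $m=1$ immediate.

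The main obstacle is that the reduced matrix does not literally inherit the normalization used to start the elimination: after one double peel the leading determined entries become $q_1(y)-q_1(y_1),q_2(y)-q_2(y_1),\ldots$ rather than the constant $1$, so the $x$-only property that drove the first step is lost verbatim. Overcoming this requires a sufficiently general inductive hypothesis in which the determined, column-constant (resp.\ row-constant) entries form an arbitrary \emph{monic, strictly degree-increasing} family, together with the observation that the first determined block, although no longer $x$-independent, is still rank one within each $x_k$-group (all its columns are proportional to the single vector $\bigl(q_{j-1}(y_l)\bigr)_l$), so that a rank-one elimination still produces the triangular split. Here the monic/degree hypothesis is exactly what guarantees that the pivot vectors are nonzero and that the peeled blocks are genuinely the advertised univariate Vandermonde determinants; alternatively one may first invoke multilinearity of $\det V$ in each arbitrary $*$-entry (each such polynomial occupies a single column of $V$ and a single factor on the right) to reduce to the case where every $*$ is a monomial, and check the pivot conditions there. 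Making this generalized, self-similar hypothesis precise and stable under the reduction is the crux; once it is in place the factorization telescopes and the accumulated sign is absorbed into the $\pm$.
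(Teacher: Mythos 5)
Your first reduction is exactly the paper's: since $Q_{:,1}\equiv 1$, the $j=1$ column block of the big Vandermonde matrix is constant in the $y$-index, so one block elimination peels off $\vdm(\setX,P_{:,1})$ and leaves a tensor-grid Vandermonde with entries $P_{r,j}(x)\bigl(Q_{r,j}(y)-Q_{r,j}(y_1)\bigr)$, $j\ge 2$ (the paper organizes this as \eqref{eq:block_form} plus a Schur complement; your per-$x_k$ row operations amount to the same thing). The genuine gap sits precisely at the point you yourself flag as the crux: the reduced data no longer satisfies the hypothesis, and you never close the induction. The paper's missing move is not a generalized hypothesis but a \emph{division}: it replaces the differences by the divided differences $\widetilde q_j(y)=\frac{q_{j+1}(y)-q_{j+1}(y_1)}{y-y_1}$, pulling out the factor $\left(\prod_{j=2}^{n}(y_j-y_1)\right)^{m}$ in \eqref{eq:det_reduction}. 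Because $q_1$ is monic of degree $1$, the new family again starts with $\widetilde q_0\equiv 1$, and each $\widetilde q_j$ is again monic of degree $j$; hence the reduced problem satisfies \emph{verbatim} the original hypothesis with $P$ and $Q$ transposed and $x,y$ interchanged, and the extracted product is reabsorbed through the identity \eqref{eq:last}. That is exactly where the monic/degree assumptions do their work, and it is the step your outline replaces by the assertion that ``once it is in place the factorization telescopes.''

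Moreover, the mechanism you offer for the generalized induction is incorrect as stated. Once the leading constant column is a polynomial $g_1$ of positive degree, monicity does not make the pivot vector nonzero: $g_1$ may vanish at some, or even all, of the remaining nodes $y_l$. The elimination must be a scaled one, subtracting $\frac{g_1(y_l)}{g_1(y_{l_0})}$ times a pivot row with $g_1(y_{l_0})\neq 0$, and the no-pivot case must be disposed of separately (there both sides of \eqref{eq:vdm_fac} vanish, since the column $\bigl(g_1(y_l)\bigr)_l$ is identically zero). The scaled elimination then produces the factor $g_1(y_{l_0})^{m}$, not a sign, and the matching identity becomes $\vdm(\setY,Q_{i,:})=\pm\, g_1(y_{l_0})\,\vdm\bigl(\setY\setminus\{y_{l_0}\},\widehat Q_{i,:}\bigr)$ with $\widehat Q_{i,j}(y)=Q_{i,j}(y)-\frac{g_1(y)}{g_1(y_{l_0})}Q_{i,j}(y_{l_0})$; so what accumulates are pivot powers whose cancellation against the right-hand side has to be verified --- ``absorbed into the $\pm$'' is not correct bookkeeping. (The multilinearity remark does not help here either: the pivots involve the determined entries $q_j$, not the $*$ entries.) With these repairs your route does go through --- indeed it then proves the factorization assuming only the constant-column/constant-row structure, showing that the monic/degree hypotheses serve only to keep the paper's normalized induction self-similar --- but as submitted the induction step, which is the entire content of the proposition, is missing.
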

{\bf Remark}. In \eqref{eq:stmat} we show the matrices for $n=m+1$. The matrices for $n=m$ can be obtained by deleting the last column of each matrix in \eqref{eq:stmat}.
\begin{proof}
We prove the proposition by induction. First we consider the case $m=1$. For $n=1$, \eqref{eq:vdm_fac} is trivial. For $n = m+1 = 2$, consider $P(x) = \bmx a_1(x) & a_2(x)\emx$, $Q(y) = \bmx b_1(y) & b_2(y)\emx$, $\setX = \{x_1\}$ and $\setY= \{y_1, y_2\}$. Then we have that
\[
\vdm(\setX \times \setY,P \circ Q) = \pm a_1(x_1) a_2(x_1) \det \bmx 
b_1(y_1) & b_2(y_1) \\
b_1(y_2) & b_2(y_2) 
\emx.
\]
Now we assume that \eqref{eq:vdm_fac} holds for $(m,n) = (k,k)$ and  we prove it for $(m,n) = (k,k+1)$. Although we prove only the induction step $(k,k) \to (k,k+1)$, the same derivations (almost without changes) hold for the step $(k,k+1) \to (k+1,k+1)$. 

Denote $N_j = \VDM(\setX,P_{:,j})$. Then $\vdm( \setX \times \setY,P \circ Q) $ can be written as 
\begin{eqnarray}\nonumber
&&\vdm(\setX \times \setY, P \circ Q)   \\\nonumber
&& =  
\pm\det
\left[
\begin{array}{c|ccc}
N_1 & N_{2} \diag(Q_{:,2}(y_1)) & \cdots &  N_{m} \diag(Q_{:,n}(y_1))  \\\hline
N_1 & N_{2} \diag(Q_{:,2}(y_2)) & \cdots &  N_{m} \diag(Q_{:,n}(y_2))   \\
\vdots & \vdots & & \vdots\\
N_1 & N_{2} \diag(Q_{:,2}(y_n)) & \cdots &  N_{m} \diag(Q_{:,n}(y_n))   \\
\end{array}
\right]  \\ \label{eq:block_form}
&& =
\pm \det(N_1)\cdot
\det
\left[\begin{array}{c|ccc}
I_m   & N_{2} \diag(Q_{:,2}(y_1)) & \cdots &  N_{n} \diag(Q_{:,n}(y_1))  \\\hline
I_m   & N_{2} \diag(Q_{:,2}(y_2)) & \cdots &  N_{n} \diag(Q_{:,n}(y_2))   \\
\vdots & \vdots & & \vdots\\
I_m   & N_{2} \diag(Q_{:,2}(y_n)) & \cdots &  N_{n} \diag(Q_{:,n}(y_n))   \\
\end{array}\right].
\end{eqnarray} 
Note that in \cite{Bos.2009.OTV} a different block representation was used. That different representation was an obstacle to derive the proof of \cite[Lemma 1]{Bos.2009.OTV}.

By applying the Schur complement formula to the block matrix in \eqref{eq:block_form}, we have that
\begin{eqnarray}\nonumber
&&
\vdm(\setX \times \setY, P \circ Q) \\
\nonumber
 &&=
\pm\det(N_1) \det
\bmx
N_{2} \diag(Q_{:,2}(y_2) - Q_{:,2}(y_1)) & \cdots &  N_{n} \diag(Q_{:,n}(y_2)-Q_{:,n}(y_1))   \\
 \vdots & & \vdots\\
N_{2} \diag(Q_{:,2}(y_n) - Q_{:,2}(y_1)) & \cdots &  N_{n} \diag(Q_{:,n}(y_n)-Q_{:,n}(y_1))   \\
\emx \\ \nonumber
&&=
\pm\det(N_1) \left(\prod\limits_{j=2}^n(y_j-y_1)\right)^m 
\det \bmx
N_{2} \diag(\widetilde{Q}_{:,1}(y_2)) & \cdots &  N_{n} \diag(\widetilde{Q}_{:,n-1}(y_2))   \\
 \vdots & & \vdots\\
N_{2} \diag(\widetilde{Q}_{:,1}(y_n)) & \cdots &  N_{n} \diag(\widetilde{Q}_{:,n-1}(y_n))   \\
\emx, \\ \label{eq:det_reduction}
&& =
\pm\vdm(\setX,P_{:,1}) \left(\prod\limits_{j=2}^n(y_j-y_1)\right)^m 
\vdm\left(\setX \times \widetilde{\setY},P_{:, 2:n} \circ \widetilde{Q}\right).
\end{eqnarray}
where $\widetilde{\setY} := \{y_2,\ldots, y_n\}$ and $\widetilde{Q} \in \bbR^{m \times (n-1)}[y]$ is the polynomial matrix defined as
\[
\widetilde{Q}(y) := \frac{Q_{:,2:n} (y) - Q_{:,2:n} (y_1)}{y-y_1}.
\] 
Note that the matrix $\widetilde{Q}$ has the form
\begin{equation*}
\widetilde{Q}(y) = 
\bmx
\widetilde{*}   & \widetilde{*}      &  \dots & \dots       & \widetilde{*}     \\
1 & \widetilde{*} &  \dots      &   \dots       & \widetilde{*}      \\
1 & \widetilde{q}_1(y) &  \ddots      &          & \widetilde{*}      \\
\vdots & \vdots &  \ddots & \ddots   & \vdots \\
1 & \widetilde{q}_1(y) &  \dots  & \widetilde{q}_{n-2}(y)   & \widetilde{*}     
\emx, \quad
\end{equation*}
where $\widetilde{q}_j(y) := \frac{q_{j+1}(y) - q_{j+1}(y_1)}{y-y_1}$, and hence $\deg (\widetilde{q}_j) = j$ and $\widetilde{q}_j$ is monic. Therefore, we can interchange the variables $x$ and $y$, transpose the matrices $P_{:,2:n}$ and $\widetilde{Q}$ and apply the induction assumption to \eqref{eq:det_reduction}. Formally, for 
\[
P'(x') := (\widetilde{Q}(x'))^{\top} \in \bbR^{k\times k}[x'], \quad Q'(y') := (P_{:,2:n}(y'))^{\top}\in \bbR^{k \times k}[y'], \quad \setX' := \widetilde{\setY},\quad \setY' := \setX,
\]
the equality \eqref{eq:vdm_fac} takes place by the induction assumption, and we have that 
\[
\vdm\left(\setX \times \widetilde{\setY},P_{:, 2:n} \circ \widetilde{Q}\right) 
= \pm\left(\prod\limits_{j=2}^{n} \vdm( \setX,P_{:,j}) \right)  \left(\prod\limits_{i=1}^{m} \vdm(\widetilde{\setY},\widetilde{Q}_{i,:}) \right).
\]
Hence, from \eqref{eq:det_reduction} we have that
\[
\vdm(\setX \times \setY,P \circ Q) =
\pm\left(\prod\limits_{j=1}^{n} \vdm( \setX,P_{:,j}) \right)  \left(\prod\limits_{j=2}^n(y_j-y_1)\right)^m  \left(\prod\limits_{i=1}^{m} \vdm(\widetilde{\setY},\widetilde{Q}_{i,:}) \right).
\]
Then the equality \eqref{eq:vdm_fac} will hold if for all  $i = 1,\ldots, m$ the following equality holds: 
\begin{equation}\label{eq:last}
\pm\left( \prod\limits_{j=2}^n(y_j-y_1) \right) \vdm(\widetilde{\setY}, \widetilde{Q}_{i,:}) = \vdm(\setY,Q_{i,:}).
\end{equation}
Consider a row vector polynomial $A(y) = \bmx 1 & a_1(y) &\cdots & a_{n-1}(y)  \emx$. Then,
\begin{eqnarray*}
\vdm(\setY, A)&& =  
\pm\det
\left[
\begin{array}{c|ccc}
1 & a_1(y_1) & \cdots & a_{n-1}(y_1)  \\\hline
1 & a_1(y_2) & \cdots & a_{n-1}(y_2)  \\
\vdots & \vdots & & \vdots\\
1 & a_1(y_n) & \cdots & a_{n-1}(y_n) \\
\end{array}
\right]\\\nonumber
&& =  
\pm\det
\bmx
a_1(y_2) - a_1(y_1) & \cdots & a_{n-1}(y_2) - a_{n-1}(y_1) \\
 \vdots & & \vdots\\
a_1(y_n) - a_1(y_1) & \cdots & a_{n-1}(y_n) - a_{n-1}(y_1) \\
\emx
\\ \nonumber
&& = \pm\left(\prod\limits_{j=2}^n(y_j-y_1)\right)  \vdm(\widetilde{\setY},\widetilde{A}), \\\nonumber
\end{eqnarray*}
where 
\begin{equation}\label{eq:red_op}
\widetilde{A}(y) := \frac{\bmx
a_1(y) - a_1(y_1) & \cdots & a_{n-1}(y)  - a_{n-1}(y_1) \emx}{y-y_1}.
\end{equation}
This proves \eqref{eq:last}.
\end{proof}

\subsection{Discussion}
Consider a case which is simpler than that of Proposition~\ref{prop:vdm_separable}. Let $\setX = \{x_1, \ldots, x_m\}$ and $\setY = \{y_1,\ldots, y_n\}$.
Let $P(x) \in \bbR^{m \times n}[x]$ and $Q(y) \in \bbR^{m \times n}$ be given by
\begin{equation}\label{eq:stmat_rank_one}
P(x) = 
\bmx
p_1(x) & p_1(x) &  \dots  & p_{1}(x)     \\
p_2(x) & p_2(x) &  \dots  & p_{2}(x)  \\
\vdots & \vdots &         &  \vdots   \\
p_m(x) & p_m(x) &  \dots  & p_{m}(x)        
\emx,
\quad
Q(y) = 
\bmx
q_1(y) & q_2(y) &  \dots  & q_{n}(y)     \\
q_1(y) & q_2(y) &  \dots  & q_{n}(y)  \\
\vdots & \vdots &         &  \vdots   \\
q_1(y) & q_2(y) &  \dots  & q_{n}(y)        
\emx.
\end{equation}
Then we have that 
\[
(P \circ Q) (x,y) = 
\bmx
p_1(x)   \\
p_2(x) \\
\vdots  \\
p_m(x)    
\emx
\bmx
q_1(y) & q_2(y) &  \dots  & q_{n}(y) 
\emx,
\]
and therefore 
\begin{equation}\label{eq:vdm_rank_one}
\vdm (\setX \times \setY, P \circ Q) = \pm \det\Big( \VDM(\setX, \{p_1, \ldots, p_m\}) \otimes \VDM(\setY, \{q_1, \ldots, q_n\})\Big).
\end{equation}
By properties of the Kronecker product \cite{Horn.1991.TMA}, \eqref{eq:vdm_rank_one} can be rewritten as
\begin{equation}\label{eq:vdm_rank_one2}
\vdm (\setX \times \setY, P \circ Q) = \pm (\vdm(\setX, \{p_1, \ldots, p_m\}))^n (\vdm(\setY, \{q_1, \ldots, q_n\}))^m.
\end{equation}
An example of \eqref{eq:stmat_rank_one} is
\[
P(x) = 
\bmx
1 & 1 &  \dots  & 1     \\
x & x &  \dots  & x \\
\vdots & \vdots &         &  \vdots   \\
x^{m-1} & x^{m-1} &  \dots  & x^{m-1}
\emx,
\quad
Q(y) = 
\bmx
1 & y &  \dots  & y^{n-1}\\
1 & y &  \dots  & y^{n-1}  \\
\vdots & \vdots &        &  \vdots   \\
1 & y &  \dots  & y^{n-1}        
\emx,
\]
where
\[
\vdm (\setX \times \setY, P \circ Q) = \pm \left(\prod_{1\le i <j\le m} (x_i - x_j)\right)^n \left(\prod_{1\le i <j\le n} (y_i - y_j)\right)^m.
\]
It is easy to see, that \eqref{eq:vdm_rank_one2} is an equivalent of \eqref{eq:vdm_fac} for matrices of the form \eqref{eq:stmat_rank_one}. Thus, Proposition~\ref{prop:vdm_separable} can be interpreted as extension of the factorization property  from a special case of rank-one matrices \eqref{eq:stmat_rank_one} to a more general class \eqref{eq:stmat}. 

For the class of matrices \eqref{eq:stmat},  any submatrix that is contained in the upper triangle of $P(x)$ has rank $1$. The same holds for any submatrix contained in the lower triangle of $Q(y)$. Therefore, the matrix $P \circ Q$ is the Hadamard product of a \emph{lower semiseparable} matrix $Q(y)$ and an \emph{upper semiseparable} matrix $P(x)$ (these matrices are also called Hessenberg-like matrices in \cite[Ch. 8]{Vandebril.2007.MCS}). However, the relation to semiseparable matrices was not used in the proof of Proposition~\ref{prop:vdm_separable}.

\section{Application to Padua and Padua-like points}
\subsection{Main definitions}
We define a class of points that distribute as the $n$-degree Padua points \eqref{pd1}. For simplicity, we consider $n$ even (the case $n$ odd is similar). 
The $n$-degree \textit{Padua-like points} $\calA_{n}$ are defined as a union of two grids 
\[
\calA_{n} := \calA^{o}_{n} \cup \calA^{e}_{n},
\]
where
\[
\calA^{o}_{n} := \{(x_{2i+1}, y_{2j+1})\,|\, 0 \le i \le \frac{n}{2}, 0 \le j \le \frac{n}{2}\}
\]
and
\[
\calA^{e}_{n} := \{(x_{2i}, y_{2j})\,|\, 1 \le i \le \frac{n}{2}, 1 \le j \le \frac{n}{2} +1\},
\]
and $\{x_{i}\}^{n+1}_{i=1}$, $\{y_{j}\}^{n+2}_{j=1}$ are distinct sets of points. 

The Padua points \eqref{pd1} are a special case of Padua-like points, with
\[
\calA^{o}_{n} =  C_{n+1}^\mathrm{o} \times C_{n+2}^\mathrm{o}, \quad  \calA^{e}_{n} = C_{n+1}^\mathrm{e} \times C_{n+2}^\mathrm{e},
\]
and  $\{x_{i}\}^{n+1}_{i=1} =  C_{n+1}$, $\{y_{j}\}^{n+2}_{j=1} =  C_{n+2}$.

We are interested in expressing the Vandermonde determinant $\vdm(\calA_n,\calB_n)$, where
\[
\calB_n := \{x^\alpha y^\beta\,|\, \alpha+ \beta \le n; \alpha, \beta \ge 0\}
\]
is the set of all monomials of degree $\le n$. We  also define the square set of monomials
\[
\calT_n := \{x^\alpha y^\beta\,|\, 0 \le \alpha, \beta \le n \}.
\]

\subsection{Vandermonde determinant for Padua-like points}
It is easy to show  (see \cite{Bos.2009.OTV}) that
\[
\vdm(\calA_n,\calB_n) = \pm \vdm(\calA_n,\calT_{\frac{n}{2}} \cup \calT^{e}_{\frac{n}{2}} ),
\]
where 
\[
\calT^{e}_{\frac{n}{2}} := (a(x)\calB_{\frac{n}{2}- 1}) \cup (b(y) \calB_{\frac{n}{2} - 1}),
\]
and $a(x)$, $b(y)$ are the annihilating polynomials of the points $\calA^{o}_{n}$, that is
\[
a(x) = \prod_{i=0}^{\frac{n}{2}} (x-x_{2i+1}), \quad
b(y) = \prod_{j=0}^{\frac{n}{2}} (y-y_{2j+1}).
\]
By construction,  all the elements of $\calT^{e}_{\frac{n}{2}}$ vanish on $\calA^{o}_{n}$, which allows us to split the Vandermonde determinant into a product of two determinants:
\begin{eqnarray}\nonumber
\vdm(\calA_n, \calB_n) &= & \pm
\det \bmx
\VDM(\calA^{o}_{n}, \calT_{\frac{n}{2}}) & 0 \\
\VDM(\calA^{e}_{n}, \calT_{\frac{n}{2}}) & \VDM(\calA^{e}_{n}, \calT^{e}_{\frac{n}{2}}) \\
\emx
 \\\label{eq:padua_det}
& = &\pm \vdm(\calA^{o}_{n}, \calT_{\frac{n}{2}}) \times \vdm(\calA^{e}_{n}, \calT^{e}_{\frac{n}{2}}).
\end{eqnarray}
We consider the computation of the second factor in \eqref{eq:padua_det}. For this, we define two polynomial matrices of size $(\frac{n}{2}) \times (\frac{n}{2}+1)$
\begin{equation}\label{eq:stmat_padua}
\begin{split}
&Q(y) = 
\bmx
1      & b(y)    &  y b(y) & \dots                 & y^{\frac{n}{2} - 1} b(y)  \\
1      & y       &  b(y)   & \ddots               & \vdots      \\
\vdots & \vdots  &  \ddots & \ddots                & y b(y) \\
1      & y       & \dots   & y^{\frac{n}{2} - 1}   & b(y)     
\emx, \\
&P(x) = 
\bmx
a(x)   & 1      &  1      & \dots    & 1 \\
xa(x)   & a(x)   &  x      & \dots    & x      \\
\vdots &  \ddots      &  \ddots & \ddots   & \vdots \\
x^{\frac{n}{2} - 1}a(x)    & \dots     &  xa(x)  & a(x)      & x^{\frac{n}{2} - 1}     
\emx.
\end{split}
\end{equation}
It is easy to see that $\calT^{e}_{\frac{n}{2}}$ consists of the entries of $P \circ Q$,
and therefore 
\[
\vdm(\calA^{e}_{n}, \calT^{e}_{\frac{n}{2}}) = \vdm (\setX \times \setY, P \circ Q),
\]
for $\setX = \{x_{2i}\,|\,1\le i \le \frac{n}{2}\}$ and $\setY = \{y_{2i}\,|\,1\le i \le \frac{n}{2}+1\}$. Since the matrices $P(x)$ and $Q(y)$ satisfy the condition \eqref{eq:stmat}, Proposition~\ref{prop:vdm_separable} can be applied.
\begin{corollary}
\begin{equation}\label{eq:vdm_padua}
 \vdm(\calA^{e}_{n}, \calT^{e}_{\frac{n}{2}}) = \pm\left(\prod\limits_{j=1}^{\frac{n}{2}+1} \vdm(\setX,P_{:,j}) \right) \cdot \left(\prod\limits_{i=1}^{\frac{n}{2}} \vdm(\setY,Q_{i,:})\right).
\end{equation}
\end{corollary}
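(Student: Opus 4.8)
The plan is to obtain the corollary as an immediate consequence of Proposition~\ref{prop:vdm_separable}, so the only real work is to check that the matrices $P(x)$ and $Q(y)$ exhibited in \eqref{eq:stmat_padua} meet its hypotheses. First I would fix the parameters: setting $m = \frac{n}{2}$, both matrices in \eqref{eq:stmat_padua} have size $m \times (m+1)$, so we are squarely in the case $n = m+1$ of the proposition, and the point sets $\setX = \{x_{2i}\}_{i=1}^{m}$ and $\setY = \{y_{2i}\}_{i=1}^{m+1}$ have precisely the required cardinalities $m$ and $m+1$ so that $\VDM(\setX \times \setY, P \circ Q)$ is the square matrix whose determinant is $\vdm(\calA^{e}_{n}, \calT^{e}_{\frac{n}{2}})$.

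Second, I would verify the structural pattern \eqref{eq:stmat} entry by entry. For $Q(y)$ the lower triangular part (entries on and below the main diagonal) must have constant columns; reading them off \eqref{eq:stmat_padua} these columns are $1, y, y^2, \ldots$, so the polynomials playing the role of the $q_j(y)$ are the monomials $y^{j}$, which are indeed monic with $\deg = j$. The remaining, strictly upper triangular, entries are the products $y^{k}b(y)$, and these occupy exactly the free ``$*$'' positions, so no condition is imposed on them. Symmetrically, for $P(x)$ the strictly upper triangular part must have constant rows; these rows are $1, x, x^2, \ldots$, so the $p_j(x)$ are the monic monomials $x^{j}$ of degree $j$, while the terms $x^{k}a(x)$ fill the unconstrained ``$*$'' positions on and below the diagonal. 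Thus both matrices match the template \eqref{eq:stmat} together with the monic/degree requirements.

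Finally, having matched $P$ and $Q$ to the hypotheses, I would simply invoke Proposition~\ref{prop:vdm_separable}: its conclusion \eqref{eq:vdm_fac}, specialized to the present $P$, $Q$, $\setX$, $\setY$, is exactly the asserted identity \eqref{eq:vdm_padua}. I expect no genuine obstacle here, since the corollary is a direct instantiation; the one point that requires care is the bookkeeping of which entries are constrained versus free, namely confirming that the annihilating polynomials $a(x)$ and $b(y)$ always land in the arbitrary ``$*$'' slots (the lower triangle of $P$ and the upper triangle of $Q$), whereas the pure monomials $x^{j}$ and $y^{j}$ occupy precisely the constant rows and columns that the proposition requires to be monic of the prescribed degree.
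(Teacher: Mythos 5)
Your proposal is correct and follows essentially the same route as the paper: the text preceding the corollary identifies $\vdm(\calA^{e}_{n}, \calT^{e}_{\frac{n}{2}})$ with $\vdm(\setX \times \setY, P \circ Q)$ for the matrices in \eqref{eq:stmat_padua}, observes that these satisfy the template \eqref{eq:stmat} (with $p_j(x) = x^j$, $q_j(y) = y^j$ monic of degree $j$, and the $x^k a(x)$, $y^k b(y)$ terms in the free positions), and invokes Proposition~\ref{prop:vdm_separable}. Your bookkeeping of the constrained versus free entries, and of the sizes $m = \frac{n}{2}$, $n = m+1$, matches the paper's intended argument exactly.
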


\subsection{Possible simplifications for inner determinants}
Note that we still need to compute the inner determinants in \eqref{eq:vdm_padua}, which have the form $\vdm(\setX, P_{:,j})$, with $\setX = \{x_1, \ldots, x_m\}$, $1 \le j \le m + 1$, and
\[
P_{:,j} (x) := \bmx 1 & x & \dots & x^{j-2} & a(x) &xa(x) & \dots &x^{m-j} a(x) \emx^{\top}.
\]
In the extreme cases
\[
\begin{split}
j = m+1: &\quad P_{:,m+1}(x) := \bmx 1 & x & \dots & x^{m-1}  \emx^{\top},\\
j = 1: & \quad P_{:,1} (x) :=  \bmx  a(x) &xa(x) & \dots &x^{m-1} a(x) \emx^{\top} = a(x) P_{:,m+1} (x),
\end{split}
\]
the determinant $\vdm(\setX, P_{:,j}(x))$ can be computed explicitly, as it was done in \cite[Lemma 3]{Bos.2009.OTV}. We will try to exploit \eqref{eq:last} to simplify the expression. First, we note that the successive application of the operation  \eqref{eq:red_op} leads to
\[
\frac{\frac{a(x) - a(x_1)}{x-x_1} - \frac{a(x_2) - a(x_1)}{x_2-x_1}}{x-x_2} =
\frac{a(x)  - \frac{x(a(x_2) - a(x_1)) -x_1a(x_2) + a(x_1) x_2}{x_2-x_1}}{(x-x_2) (x-x_1)}
\]
Therefore, the result of $\ell$ successive applications of \eqref{eq:red_op} to a polynomial $a(x)$ is equal to
\[
\widetilde{a}^{(\ell)}(x) = \frac{a(x) - p(x; x_1, \ldots, x_\ell)}{\prod\limits_{i=1}^\ell (x-x_i)},
\]
where $p(x; x_1, \ldots, x_\ell)$ is the $(\ell-1)$-th degree interpolating polynomial of $a(x)$ at $\{x_1, \ldots, x_\ell\}$. Then the following proposition holds.
\begin{proposition}\label{prop:simplify_1d}
For $2 \le j < (m-1)/2$ we have that
\[
\vdm(\setX, P_{:,j}) = 
\pm \left(\prod\limits_{i=1}^{j-1} \prod\limits_{k=i+1}^{m} (x_i - x_k)  \right)
\vdm(\{x_{j}, \ldots, x_{m}\}, {\setP}^{(j)}),
\]
where
\[
{\setP}^{(j)} = 
\{\widetilde{a}^{(j-1)}(x), \ldots, \widetilde{a}^{(1)}(x), a(x), x a(x), \ldots, x^{m-2j+1}a(x)\}.
\]
\end{proposition}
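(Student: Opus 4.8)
The plan is to iterate the one-node reduction used in the proof of Proposition~\ref{prop:vdm_separable}, i.e.\ the operation \eqref{eq:red_op} together with the factorization \eqref{eq:last}, exactly $j-1$ times, peeling off the nodes $x_1, \ldots, x_{j-1}$ one after another. At the $i$-th step I would drop the node $x_i$ from the current node set, apply \eqref{eq:red_op} to every polynomial of the current set, and extract the scalar $\prod_{k=i+1}^{m}(x_i - x_k)$ (up to sign). Each application of \eqref{eq:last} needs a constant in the current set: the ``pure power'' part, which starts as $\{1, x, \ldots, x^{j-2}\}$ and loses one element per step, always contains the monic constant $1$ (the reduction of a monic linear polynomial is $1$) and is exhausted after exactly $j-1$ steps; the $a$-block never produces a constant because, in the setting at hand, $a$ is monic of degree $m+1$. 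This yields
\[
\vdm(\setX, P_{:,j}) = \pm\left(\prod_{i=1}^{j-1}\prod_{k=i+1}^{m}(x_i - x_k)\right) \vdm(\{x_j, \ldots, x_m\}, \mathcal{R}),
\]
where $\mathcal{R}$ is the set produced by the $j-1$ reductions, and it remains to show $\vdm(\{x_j, \ldots, x_m\}, \mathcal{R}) = \pm\,\vdm(\{x_j, \ldots, x_m\}, \setP^{(j)})$.

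Rather than compute the entries of $\mathcal{R}$ individually, I would work with its linear span. Let $\Pi_d$ denote the polynomials of degree $\le d$, let $w_\ell(x) := \prod_{i=1}^{\ell}(x-x_i)$, and let $L_\ell[f]$ denote the interpolant of $f$ at $x_1, \ldots, x_\ell$ (so $p(x; x_1, \ldots, x_\ell) = L_\ell[a]$ and $\widetilde{a}^{(\ell)} = (a - L_\ell[a])/w_\ell$). By the remainder formula recorded just before the statement, which holds for any polynomial in place of $a$, the $(j-1)$-fold reduction sends $f \mapsto (f - L_{j-1}[f])/w_{j-1}$. Since $P_{:,j}$ spans $\Pi_{j-2} + a\,\Pi_{m-j}$ and every element of $\Pi_{j-2}$ is reproduced by interpolation at the $j-1$ nodes, the span of $\mathcal{R}$ equals
\[
V := \left\{\frac{a(x)\,r(x) - L_{j-1}[a r](x)}{w_{j-1}(x)} \;:\; r \in \Pi_{m-j}\right\}.
\]
The map $r \mapsto (ar - L_{j-1}[ar])/w_{j-1}$ is injective on $\Pi_{m-j}$ (if the quotient vanishes then $ar = L_{j-1}[ar]$ has degree $\le j-2 < \deg a$, forcing $r=0$), so $\dim V = m-j+1 = |\setP^{(j)}|$.

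The crux is to verify $\setP^{(j)} \subseteq V$. For the entries $x^i a$ with $0 \le i \le m-2j+1$ I take $r = x^i w_{j-1}$, which lies in $\Pi_{m-j}$ for every $i$ in that range (nonempty thanks to $j<(m-1)/2$); since $w_{j-1}$ vanishes at the nodes, $L_{j-1}[a x^i w_{j-1}] = 0$ and the quotient collapses to $x^i a$. For the entries $\widetilde{a}^{(\ell)}$ with $1 \le \ell \le j-1$ I take $r = \prod_{i=\ell+1}^{j-1}(x - x_i)$, and the claim $(ar - L_{j-1}[ar])/w_{j-1} = \widetilde{a}^{(\ell)}$ reduces to the identity
\[
L_{j-1}[a r] = r\, L_\ell[a].
\]
This is the heart of the argument: both sides are polynomials of degree $\le j-2$ and agree at $x_1, \ldots, x_{j-1}$ (they equal $ar$ at $x_1, \ldots, x_\ell$, and both vanish at $x_{\ell+1}, \ldots, x_{j-1}$ because $r$ does), hence they coincide. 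Establishing this interpolation identity is the step I expect to be the only real obstacle; everything else is bookkeeping.

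Finally, reductions preserve monicity and drop degree by one, so both $\mathcal{R}$ and $\setP^{(j)}$ are monic families with the same distinct degrees $\deg a - (j-1), \ldots, \deg a + m - 2j + 1$, and both are bases of $V$. Two monic graded bases of one space are related by a unimodular change of basis, so their Vandermonde determinants over $\{x_j, \ldots, x_m\}$ agree up to sign. Substituting into the first displayed identity then yields the claim.
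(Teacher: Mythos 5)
Your proof is correct, and its first half --- iterating \eqref{eq:red_op} and \eqref{eq:last} to peel off the nodes $x_1,\ldots,x_{j-1}$ and extract the factor $\prod_{i=1}^{j-1}\prod_{k=i+1}^{m}(x_i-x_k)$, with the bookkeeping that exactly the pure-power block supplies the constant consumed at each step --- is the same as the paper's. Where you genuinely diverge is in how the reduced family is identified with $\setP^{(j)}$. The paper never leaves the step-by-step recursion: it checks that a single application of \eqref{eq:red_op} to a vector of the invariant form $(\widetilde{a}^{(p)},\ldots,\widetilde{a}^{(1)},\,1,\ldots,x^{s},\,a,\ldots,x^{q}a)$ returns a vector of the same form multiplied by a unit upper triangular matrix, via the computation $\widetilde{x^k a}=a\,\widetilde{x^k}+x_1^k\,\widetilde{a}$ together with monicity of $\widetilde{x^k}$; after $j-1$ steps the target set appears automatically, without ever describing the reduced polynomials in closed form. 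You instead compose all $j-1$ reductions into the single linear map $f\mapsto (f-L_{j-1}[f])/w_{j-1}$ (with $w_{j-1}(x)=\prod_{i=1}^{j-1}(x-x_i)$ and $L_{j-1}[f]$ the interpolant at $x_1,\ldots,x_{j-1}$), identify the span $V$ of the resulting family, and show that $\setP^{(j)}$ is a second basis of $V$ with the same distinct degrees and matching leading coefficients; the crux is the interpolation identity $L_{j-1}[ar]=r\,L_{\ell}[a]$ for $r=\prod_{i=\ell+1}^{j-1}(x-x_i)$, which you prove correctly (both sides have degree at most $j-2$ and agree at the $j-1$ nodes), and a unit triangular change of basis then equates the two Vandermonde determinants up to sign. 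Both routes are sound. The paper's recursion is more elementary and imposes no hypotheses on $a$ whatsoever, since only the monic pure powers enter the triangular elimination; your argument buys an explicit closed-form description of the reduced polynomials and of the space they span, but it leans on the context-specific facts that $a$ has leading coefficient $1$ and degree $m+1$ (true for the Padua annihilating polynomial, and needed both for injectivity of $r\mapsto (ar-L_{j-1}[ar])/w_{j-1}$ on $\Pi_{m-j}$ and for the distinct-degree count), facts which the proposition's wording leaves implicit.
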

\begin{proof}
Consider a polynomial vector
\[
A(x) = \bmx \widetilde{a}^{(p)}(x) & \ldots & \widetilde{a}^{(1)} (x) & 1 &  \cdots &  x^{s} &  a(x) & x a(x) & \ldots & x^{q} a(x) \emx \in \bbR^{1\times M}[x],
\]
and apply the operation  \eqref{eq:red_op} to it (for the set of points $\setX = \{x_1,\ldots, x_M\}$):
\[
\widetilde{A}(x) =
\bmx
\widetilde{a}^{(p+1)}(x) & \ldots & \widetilde{a}^{(2)}(x) &
\widetilde{x} & \cdots &  \widetilde{x^{s}} &  \widetilde{a(x)} & \widetilde{xa(x)}  & \cdots & \widetilde{x^q a(x)} \emx.
\]
Therefore,
\[
\widetilde{x^{k}} =  \frac{x^{k} - x_1^{k}}{x-x_1} = x^{k-1} + x_1 x^{k-2} + \cdots + x_1^{k-1},
\]
and for $k \ge 1$,
\[
\widetilde{x^k a(x)} =  \frac{x^{k} a(x) - x_1^{k} a(x_1)}{x-x_1} = a(x) \frac{x^{k} - x_1^{k}}{x-x_1} + x_1^{k} \widetilde{a}(x).
\]
Hence, 
\[
\widetilde{A}(x) = B(x)
\bmx
1 & * & \cdots & * \\
0 & 1 & \cdots & \vdots \\
\vdots & \ddots & \ddots & * \\
0 & \cdots & 0 & 1
\emx,
\]
where
\[
B(x) =
\bmx
\widetilde{a}^{(p+1)}(x) & \ldots & \widetilde{a}^{(2)}(x) & \widetilde{a}^{(1)}(x) &
1 & \cdots &  x^{s-1} &   a(x)  & \cdots & x^{q-1} a(x)
\emx,
\]
Therefore,
\[
\begin{split}
\vdm (\{x_1,\ldots, x_M\}, A) &= 
\pm\left(\prod\limits_{k=2}^{M} (x_1 - x_k) \right) \vdm (\{x_2,\ldots, x_M\}, \widetilde{A})\\
&= 
\pm\left(\prod\limits_{k=2}^{M} (x_1 - x_k) \right) \vdm (\{x_2,\ldots, x_M\}, B).
\end{split}
\]
The rest of the proof follows by applying recursively the same argument, since the polynomial vector $B$ is of the same form as the polynomial vector $A$.
\end{proof}
We note that Proposition~\ref{prop:simplify_1d} can be also extended to handle the case $j \ge (m-1)/2$. We also note that Proposition~\ref{prop:simplify_1d} probably does not simplify much the expressions for the determinants of Padua-like points, since we still need to compute $\vdm(\{x_{j}, \ldots, x_{m}\}, {\setP}^{(j)})$.

\vskip 0.2in
{\bf Acknowledgments}. The first author is supported by the Progetto d'Ateneo ``Multivariate approximation with application to image reconstruction'' of the University of Padova (PRAT12).  The second author is supported by the European Research Council under the European Union's Seventh Framework Programme (FP7/2007-2013) / ERC Grant agreement No. 258581 ``Structured low-rank approximation: Theory, algorithms, and applications''.

\vskip 0.3in
\bibliographystyle{alpha}
\bibliography{vdm}

\end{document}